\numberwithin{equation}{section}
\newtheorem{theorem}{Theorem}[section]
\newtheorem{proposition}[theorem]{Proposition}
\newtheorem{lemma}[theorem]{Lemma}
\newtheorem{corollary}[theorem]{Corollary}
\theoremstyle{definition}
\newtheorem{definition}[theorem]{Definition}
\newtheorem{remark}[theorem]{Remark}
\begin{document}

\baselineskip=15.5pt

\title[Connections and genuinely ramified maps]{Connections and genuinely ramified maps
of curves}

\author[I. Biswas]{Indranil Biswas}

\address{School of Mathematics, Tata Institute of Fundamental
Research, Homi Bhabha Road, Mumbai 400005, India}

\email{indranil@math.tifr.res.in}

\author[F.-X. Machu]{Francois-Xavier Machu}

\address{ESIEA, 74 bis Av. Maurice Thorez, 94200 Ivry-sur-Seine, France}

\email{fx.machu@gmail.com}

\author[A. J. Parameswaran]{A. J. Parameswaran}

\address{School of Mathematics, Tata Institute of Fundamental
Research, Homi Bhabha Road, Mumbai 400005, India}

\email{param@math.tifr.res.in}

\subjclass[2010]{14H30, 14H60, 53B15}

\keywords{Genuinely ramified map, connection, singularity}

\date{}

\begin{abstract}
Given a singular connection $D$ on a vector bundle $E$ over an irreducible smooth projective
curve $X$, defined over an algebraically closed field, we show that there is a unique maximal
subsheaf of $E$ on which $D$ induces a nonsingular connection. Given a generically smooth
map $\phi\, :\, Y\, \longrightarrow\, X$ between irreducible smooth projective
curves, and a singular connection $(V,\, D)$ on $Y$, the direct image $\phi_*V$ has
a singular connection. Let $\textbf{R}(\phi_*{\mathcal O}_Y)$ be the unique maximal subsheaf
on which the singular connection on $\phi_*{\mathcal O}_Y$ --- corresponding
to the trivial connection on ${\mathcal O}_Y$ --- induces a nonsingular connection.
We prove that the homomorphism of \'etale fundamental groups $\phi_*\,:\,
\pi_1^{\rm et}(Y,\, y_0) \, \longrightarrow\, \pi_1^{\rm et}(X,\, \phi(y_0))$ induced by
$\phi$ is surjective if and only if 
${\mathcal O}_X\, \subset\, \textbf{R}(\phi_*{\mathcal O}_Y)$ is the unique maximal
semistable subsheaf.

When the characteristic of the base field is zero, this homomorphism $\phi_*$ is surjective if and only if ${\mathcal O}_X
\, =\, \textbf{R}(\phi_*{\mathcal O}_Y)$. For any nonsingular connection $D$ on a vector bundle $V$ over $X$, there
is a natural map $V\,\hookrightarrow\, {\bf R}(\phi_*\phi^*V)$. When the characteristic of the base field is
zero, we prove that the map $\phi$ is genuinely ramified if and only if $V\, =\, {\bf R}(\phi_*\phi^*V)$.
\end{abstract}

\maketitle

\section{Introduction}

Let $X$ and $Y$ be irreducible smooth projective
curves, defined over an algebraically closed field $k$, and let
$\phi\, :\, Y\, \longrightarrow\, X$ be a morphism which is generically smooth (in other
words, $\phi$ is surjective and separable). Fix a base point $y_0\, \in\, Y$, and
consider the homomorphism of \'etale fundamental groups $\phi_*\,:\,
\pi_1^{\rm et}(Y,\, y_0) \, \longrightarrow\, \pi_1^{\rm et}(X,\, \phi(y_0))$ induced by
$\phi$. The map $\phi$ is called genuinely ramified if $\phi_*$
is surjective \cite{BP}. There are many equivalent formulations of
the property of being genuinely ramified, which we recall below.

A map $\phi$ as above is genuinely ramified if and only if one (hence all)
of the following equivalent conditions holds (see \cite{BP}):
\begin{enumerate}
\item The map $\phi$ does not factor through some nontrivial \'etale cover of
$X$ (in particular, $\phi$ is not nontrivial \'etale).

\item The fiber product $Y\times_X Y$ is connected.

\item $\dim H^0(Y,\, \phi^*\phi_*{\mathcal O}_Y)\,=\,1$.

\item The maximal semistable subbundle of the direct image
$\phi_*{\mathcal O}_Y$ is ${\mathcal O}_X$.

\item For every stable vector bundle $E$ on $X$, the pulled back vector bundle
$\phi^*E$ on $Y$ is also stable.
\end{enumerate}
The main theorem of \cite{BP} says that the third statement in the above list holds
for $\phi$ if and only if the fifth statement in the above list holds.

Our aim is to understand the direct image of connections and to interpret the
genuinely ramified maps using the direct image of a particular connection.

Let $E$ be a vector bundle on $X$ equipped with a singular connection $D$. We prove that
there is a unique maximal subsheaf
$$
\textbf{R}(E)\, \subset\, E
$$
on which $D$ induces a nonsingular connection (see Lemma \ref{lem1} and
Definition \ref{def1}).

If $V$ is a vector bundle on $Y$ equipped with a singular connection $D$, then the direct 
image $\phi_*V$ on $X$ is equipped with a natural singular connection (see Lemma \ref{lem2}). Now 
set $(V,\, D)$ to be ${\mathcal O}_Y$ equipped with the trivial connection given by the de 
Rham differential $d$. Let $d_\phi$ denote the singular connection on $\phi_*{\mathcal 
O}_Y$ given by the trivial connection on ${\mathcal O}_Y$. We prove the following (see
Proposition \ref{prop1}):

{\it The map $\phi$ is \'etale if and only if the connection $d_\phi$ is nonsingular.}

Let
$$
\textbf{R}(\phi_*{\mathcal O}_Y)\, \subset\, \phi_*{\mathcal O}_Y
$$
be the unique maximal subsheaf on which $d_\phi$ induces a nonsingular connection.
Then we have
\begin{equation}\label{ie1}
{\mathcal O}_X\, \subset\, \textbf{R}(\phi_*{\mathcal O}_Y)\, \subset\, \phi_*{\mathcal O}_Y\, .
\end{equation}

We prove the following (see Corollary \ref{cor2}):

\textit{The map $\phi$ is genuinely ramified if and only if the subsheaf 
${\mathcal O}_X\, \hookrightarrow\, \textbf{R}(\phi_*{\mathcal O}_Y)$
in \eqref{ie1} is the (unique) maximal semistable subsheaf.}

We next prove the following (see Theorem \ref{thm1}):

{\it Assume that the characteristic of the base field $k$ is zero.
The morphism $\phi$ is genuinely ramified if and only if the inclusion map
$${\mathcal O}_X\, \hookrightarrow\, {\bf R}(\phi_*{\mathcal O}_Y)$$
in \eqref{ie1} is actually an isomorphism.}

We note that Theorem \ref{thm1} actually fails when the characteristic of the
base field $k$ is positive (see Remark \ref{re1}).

Take a vector bundle $V$ on $X$ equipped with a nonsingular connection $D$. Then
$\phi^*V$ has the pulled back nonsingular connection $\phi^*D$. Denote by $\widehat D$ the
singular connection on $\phi_*\phi^*V$ induced by $\phi^*D$. Let
$$
\textbf{R}(\phi_*\phi^*V)\, \subset\, \phi_*\phi^*V
$$
be the unique maximal subsheaf on which $\widehat D$ induces a nonsingular connection.

We prove the following (see Proposition \ref{prop3} and Proposition \ref{prop-l}):

{\it Assume that the characteristic of the base field $k$ is zero.
There is a natural map $V\, \hookrightarrow\, {\bf R}(\phi_*\phi^*V)$.
The map $\phi$ is genuinely ramified if and only if $V\, =\, {\bf R}(\phi_*\phi^*V)$.}

Proposition \ref{prop-l} was kindly pointed out by the referee.

\section{Singular connection on direct image}

The base field $k$ is assumed to be algebraically closed. Let $X$ be an irreducible
smooth projective curve defined over $k$. Fix a finite subset
$$
S\, :=\, \{x_1,\, \cdots , \, x_n\}\, \subset\, X\, .
$$
The reduced effective divisor $\sum_{i=1}^n x_i$ on $X$ will also be denoted
by $S$. The cotangent bundle of $X$ will be denoted by $K_X$.

Let $E$ be a vector bundle on $X$. A connection on $E$ is a differential operator
of order one
$$
D\, :\, E\, \longrightarrow\, E\otimes K_X
$$
such that
\begin{equation}\label{e1}
D(fs) =\, fD(s) +s\otimes df
\end{equation}
for every locally defined function $f$ on $X$ and
every locally defined section $s$ of $E$. A singular connection on $E$ with poles of order $m$
on points of $S$ is a differential operator of order one
$$
D\, :\, E\, \longrightarrow\, E\otimes K_X\otimes {\mathcal O}_X(mS)
$$
such that \eqref{e1} holds. A logarithmic connection on $E$ singular over $S$ is a singular
connection on $E$ with poles of order one on $S$. A singular connection on $E$ with poles
over $S$ is a singular connection on $E$ with poles of order $m$ on $S$ for some $m$.

We now give some examples of connections. When the characteristic of $k$ is zero, a vector 
bundle $E$ on $X$ admits a (nonsingular) connection if and only if every direct summand of 
$E$ (this includes $E$) is of degree zero \cite{We}, \cite{At} (in \cite{At} and \cite{We}
this is proved under the assumption that the base field is complex numbers; see
\cite[p.~ 145, Proposition 3.1]{BS} for the general case). If the characteristic of 
$k$ is $p\, > \, 0$, and $E$ is a vector bundle on $X$ admitting a connection, then the 
degree of every direct summand of $E$ is a multiple of $p$ \cite[p.~ 145, Proposition 3.1]{BS}. If
the characteristic of $k$ is positive, and $F_X\, :\, X\, \longrightarrow\, X$ is the absolute Frobenius 
morphism of $X$, then for any vector bundle $E$ on $X$, the pulled back vector bundle $F^*_X E$ 
has a natural connection (see \cite{Ka}, \cite{Gi}). Moreover, a subsheaf $V\, \subset\, 
F^*_X E$ is the pullback of a subsheaf of $E$ if and only if $V$ is preserved by this 
natural connection on $F^*_X E$.

\begin{lemma}\label{lem1}
Let $E$ be a vector bundle on $X$ and $D$ a singular connection on $E$
with poles of order $m$ on $S$. Then there is a unique maximal subsheaf $F$ of
$E$ on which $D$ induces a (nonsingular) connection.
\end{lemma}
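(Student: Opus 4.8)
The plan is to realize the desired subsheaf $F$ as the largest member of the family of all subsheaves of $E$ on which $D$ has no poles, and to obtain such a largest member from the noetherian property of $E$ together with the fact that this family is stable under sums.

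First I would set up the relevant family. Call a subsheaf $W\,\subseteq\, E$ \emph{regular} if $D(W)\,\subseteq\, W\otimes K_X$, where $W\otimes K_X$ is regarded as a subsheaf of $E\otimes K_X\otimes {\mathcal O}_X(mS)$ through the chain of inclusions $W\otimes K_X\,\hookrightarrow\, E\otimes K_X\,\hookrightarrow\, E\otimes K_X\otimes {\mathcal O}_X(mS)$. For such a $W$ the restriction $D|_W$ is a genuine nonsingular connection: the Leibniz identity \eqref{e1} for $D|_W$ is inherited from that for $D$, while the defining condition $D(W)\,\subseteq\, W\otimes K_X$ is precisely the requirement that $D|_W$ have no poles. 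Thus ``regular'' is exactly the condition that $D$ induces a nonsingular connection on $W$; since the zero subsheaf is regular, the family ${\mathcal C}$ of regular subsheaves of $E$ is nonempty.

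The key step is to show that ${\mathcal C}$ is closed under finite sums. Given $W_1,\, W_2\,\in\, {\mathcal C}$, any local section of $W_1+W_2$ is a sum $s_1+s_2$ with $s_i$ a local section of $W_i$, and additivity of $D$ gives $D(s_1+s_2)\,=\, D(s_1)+D(s_2)\,\in\, W_1\otimes K_X+W_2\otimes K_X$. Since $K_X$ is a line bundle, hence flat, tensoring by it is exact and preserves sums of subsheaves, so $W_1\otimes K_X+W_2\otimes K_X\,=\,(W_1+W_2)\otimes K_X$ as subsheaves of $E\otimes K_X$. Therefore $D(W_1+W_2)\,\subseteq\,(W_1+W_2)\otimes K_X$, that is, $W_1+W_2\,\in\, {\mathcal C}$. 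This commutation of the pole condition with the formation of sums, though elementary, is the one point that genuinely uses both the additivity of $D$ and the local freeness of $K_X$, and it is the step I expect to be the crux.

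Finally I would invoke that $E$, being coherent on the noetherian scheme $X$, is a noetherian object, so the nonempty family ${\mathcal C}$ satisfies the ascending chain condition and admits a maximal element $F$. For any $W\,\in\, {\mathcal C}$, the sum $F+W$ again lies in ${\mathcal C}$ and contains $F$, so maximality forces $F+W\,=\, F$, i.e. $W\,\subseteq\, F$. Hence $F$ contains every regular subsheaf, which at once shows that $F$ is the largest element of ${\mathcal C}$ and that it is the unique maximal subsheaf of $E$ on which $D$ induces a nonsingular connection, completing the proof.
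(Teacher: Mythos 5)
Your proof is correct and follows essentially the same route as the paper: the paper's entire argument is your key step, namely that the family of subsheaves $W$ with $D(W)\,\subseteq\, W\otimes K_X$ is closed under sums, from which it declares the lemma immediate. You have merely made explicit the noetherian/ascending-chain-condition argument (and the identification $(W_1+W_2)\otimes K_X \,=\, W_1\otimes K_X + W_2\otimes K_X$, valid since $K_X$ is a line bundle) that the paper leaves implicit.
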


\begin{proof}
Take coherent subsheaves $F_1,\, F_2\, \subset\, E$ such that
$$
D(F_i) \, \subset\, F_i\otimes K_X \, \subset\, E\otimes K_X\otimes {\mathcal O}_X(mS)
$$
for $i\,=\, 1,\, 2$. Then the coherent subsheaf $F_1+ F_2\, \subset\, E$, generated by 
$F_1$ and $F_2$, clearly satisfies the condition that
$$
D(F_1+F_2) \, \subset\, (F_1+F_2)\otimes K_X \, \subset\,
E\otimes K_X\otimes {\mathcal O}_X(mS)\, .
$$
The lemma follows immediately from this observation.
\end{proof}

\begin{remark}\label{rem1}
The maximal subsheaf $F\, \subset\, E$ in Lemma \ref{lem1} on which $D$ induces
a (nonsingular) connection need not be a subbundle, or in other words, $E/F$ need not
be torsionfree. To give an example, consider ${\mathcal O}_X(S)$ equipped with the
logarithmic connection given by the de Rham differential $d$. Then ${\mathcal O}_X\, \subset\,
{\mathcal O}_X(S)$ is the maximal subsheaf on which the logarithmic connection induces
a (nonsingular) connection. The quotient ${\mathcal O}_X(S)/{\mathcal O}_X$ is
a nonzero torsion sheaf.
\end{remark}

\begin{definition}\label{def1}
Let $E$ be a vector bundle on $X$ and $D$ a singular connection on $E$
with poles of order $m$ on $S$. The subsheaf
$$
\textbf{R}(E)\, :=\, F\, \subset\, E
$$
in Lemma \ref{lem1} will be called the \textit{maximal regular subsheaf}.
\end{definition}

\begin{remark}\label{rem1p}
Let $E$ be a vector bundle on $X$ and $D$ a singular connection on $E$
with poles of order $m$ on $S$. It may happen that there is no nonzero subsheaf $F$ of $E$
satisfying the condition that $$D(F) \, \subset\, F\otimes K_X.$$
See the proof of Proposition \ref{prop3} for such an example. If there is no nonzero subsheaf $F$ of $E$
such that $D(F) \, \subset\, F\otimes K_X$, then $\textbf{R}(E)$ in Definition \ref{def1}
is the zero subsheaf of $E$.
\end{remark}

Let $X$ and $Y$ be irreducible smooth projective curves defined over $k$, and let
\begin{equation}\label{e2}
\phi\, :\, Y\, \longrightarrow\, X
\end{equation}
be a generically smooth morphism. Let $S\, \subset\, X$ be the smallest subset such that
$\phi$ is \'etale over the complement $X\setminus S$. The reduced inverse image
$\phi^{-1}(S)_{\rm red}$ will be denoted by $S_Y$.

\begin{lemma}\label{lem2}
Let $E$ be a vector bundle on $Y$, and let $D$ be a singular connection on $E$ with
poles of order $m$ on $S_Y$. Then $D$ produces a singular connection on the direct
image $\phi_*E$ with poles over $S$.
\end{lemma}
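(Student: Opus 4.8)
The plan is to construct the connection on $\phi_*E$ out of $D$ by working over the \'etale locus, where a pushforward of a genuine connection is again a connection, and then to bound the order of the pole that appears over $S$ using the codifferential of $\phi$ together with the projection formula. First I would recall the codifferential $d\phi\,:\,\phi^*K_X\,\longrightarrow\, K_Y$, the pullback-of-forms map sending $\phi^*(df)$ to $d(\phi^*f)$. It is an isomorphism over the \'etale locus $Y\setminus S_Y\,=\,\phi^{-1}(X\setminus S)$, and globally it realizes the Riemann--Hurwitz identity $K_Y\,=\,\phi^*K_X\otimes{\mathcal O}_Y(R)$, where $R\,=\,\sum_y(e_y-1)\,y$ is the ramification divisor; thus $d\phi$ is the inclusion induced by the canonical section of ${\mathcal O}_Y(R)$. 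Since $R$ is supported on $S_Y$, choosing $N\,=\,m+\max_y(e_y-1)$ gives $R+mS_Y\,\le\, NS_Y$, and since each $e_y\,\ge\,1$ we have $S_Y\,\le\,\phi^*S$, whence $NS_Y\,\le\,\phi^*(NS)$ and ${\mathcal O}_Y(NS_Y)\,\hookrightarrow\,\phi^*{\mathcal O}_X(NS)$.

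Using these comparisons I would reinterpret $D$ as a $k$-linear map
$$
D\,:\,E\,\longrightarrow\, E\otimes K_Y\otimes{\mathcal O}_Y(mS_Y)\,=\,
E\otimes\phi^*K_X\otimes{\mathcal O}_Y(R+mS_Y)\,\hookrightarrow\,
E\otimes\phi^*\big(K_X\otimes{\mathcal O}_X(NS)\big).
$$
Applying $\phi_*$ and invoking the projection formula $\phi_*\big(E\otimes\phi^*L\big)\,\cong\,(\phi_*E)\otimes L$ for the locally free sheaf $L\,=\,K_X\otimes{\mathcal O}_X(NS)$, I obtain a $k$-linear first-order operator
$$
\widehat D\,:=\,\phi_*D\,:\,\phi_*E\,\longrightarrow\,
(\phi_*E)\otimes K_X\otimes{\mathcal O}_X(NS).
$$
Here $\phi_*E$ is locally free because $\phi$ is finite and flat and $Y$, $X$ are smooth curves, so the target is again locally free.

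It then remains to verify the Leibniz identity \eqref{e1} for $\widehat D$ with respect to the ${\mathcal O}_X$-module structure on $\phi_*E$. Rather than track the twisting sections explicitly over $S$, I would check Leibniz over the \'etale locus and extend by torsion-freeness. Over $X\setminus S$ the map $\phi$ is finite \'etale, $d\phi$ is an isomorphism, and $D$ is nonsingular there; the pushforward of a nonsingular connection under a finite \'etale map is a nonsingular connection, so $\widehat D$ satisfies \eqref{e1} on $X\setminus S$. For a local function $f$ on an open $U\subset X$ and a local section $s$ of $\phi_*E$ over $U$ (that is, a section of $E$ over $\phi^{-1}(U)$), one has $f\cdot s\,=\,(\phi^*f)\,s$ and $D((\phi^*f)s)\,=\,(\phi^*f)D(s)+s\otimes d(\phi^*f)$, with $d(\phi^*f)\,=\,d\phi(\phi^*df)$ the image of $\phi^*(df)$; over the \'etale locus this corresponds under the projection-formula isomorphism exactly to $s\otimes df$. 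Thus the section $\widehat D(fs)-f\widehat D(s)-s\otimes df$ of the locally free sheaf $(\phi_*E)\otimes K_X\otimes{\mathcal O}_X(NS)$ vanishes on the dense open set $U\setminus S$, and hence vanishes on all of $U$ since a locally free sheaf is torsion-free.

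I expect the main obstacle to be the bookkeeping of divisors that makes the construction of $\widehat D$ go through as a genuine sheaf map into $\phi^*$ of a single line bundle on $X$: one must align the ramification divisor $R$ with the reduced divisors $S_Y$ and $S$ so that all twists factor through $\phi^*(K_X\otimes{\mathcal O}_X(NS))$, after which the projection formula applies cleanly. The conceptual content is entirely concentrated on the \'etale locus, where the pushforward of a connection is transparently a connection; the only real work is controlling the pole order over $S$, which the inequalities $R+mS_Y\,\le\,NS_Y\,\le\,\phi^*(NS)$ accomplish, and transporting the Leibniz identity from the dense \'etale locus to all of $X$ by torsion-freeness.
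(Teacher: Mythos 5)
Your proof is correct in substance, and while its conceptual core (all the real content lives over the \'etale locus, plus the projection formula) matches the paper's, its execution is genuinely different at exactly the point where the paper is terse. The paper restricts $D$ to $Y'\,=\,Y\setminus S_Y$, pushes the resulting honest connection forward to $X'\,=\,X\setminus S$ using $\widehat{\phi}^*K_{X'}\,=\,K_{Y'}$ and the projection formula, and then asserts that this connection on $\widehat{\phi}_*E'$ \emph{extends} to a singular connection on all of $\phi_*E$ ``since the differential operator is algebraic,'' recording the pole bound $d'(r'+1)$ without a detailed derivation. You avoid the extension step altogether: via $K_Y\,=\,\phi^*K_X\otimes{\mathcal O}_Y(R)$ and the divisor inequalities $R+mS_Y\,\le\,NS_Y\,\le\,\phi^*(NS)$ you realize $D$ globally as a $k$-linear map into $E\otimes\phi^*\bigl(K_X\otimes{\mathcal O}_X(NS)\bigr)$, push forward through the projection formula, and then transport the Leibniz identity from the dense \'etale locus by observing that, for each fixed pair $(f,s)$, the error term $\widehat{D}(fs)-f\widehat{D}(s)-s\otimes df$ is a genuine section of a torsion-free sheaf vanishing on a dense open set. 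This buys something real: the existence of the meromorphic extension and an explicit pole bound fall out of divisor bookkeeping rather than being asserted, and your fixed-pair torsion-freeness argument is precisely the rigorous content behind the paper's phrase ``extends because algebraic.''

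One caveat you should repair: the paper works over an algebraically closed field of arbitrary characteristic, and $\phi$ is only assumed separable, so the ramification may be wild. In that generality the divisor $R$ in $K_Y\,=\,\phi^*K_X\otimes{\mathcal O}_Y(R)$ is the different, whose coefficient $d_y$ satisfies $d_y\,\ge\,e_y-1$ with equality only at tamely ramified points; your formula $R\,=\,\sum_y(e_y-1)\,y$ and the choice $N\,=\,m+\max_y(e_y-1)$ are therefore valid only in characteristic zero or for tame covers, and in characteristic $p$ your $N$ can be too small. This is harmless for the lemma as stated, since $R$ is still effective and supported on $S_Y$ (because $\phi$ is \'etale off $S_Y$), so replacing $N$ by $m+\max_y d_y$ fixes everything --- but as written the quantitative step is wrong in positive characteristic, which matters because the paper invokes this lemma in its characteristic-$p$ discussion (Remark \ref{re1}).
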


\begin{proof}
Denote the complements $X\setminus S$ and $Y\setminus S_Y$ by $X'$ and $Y'$ respectively.
The restriction of $\phi$ to $Y'$ will be denoted by $\widehat{\phi}$. Note
that $\widehat{\phi}^* K_{X'}\,=\, K_{Y'}$. The restriction of
$D$ (respectively, $E$) to $Y'\, \subset\, Y$ will be denoted by $D'$ (respectively,
$E'$). Taking the direct image of the operator
$$
D'\, :\, E'\, \longrightarrow\, E'\otimes K_{Y'}
$$
we get
$$
\widehat{\phi}_*D'\, :\, \widehat{\phi}_*E'\, \longrightarrow\,
\widehat{\phi}_*(E'\otimes K_{Y'})\,=\, \widehat{\phi}_*(E'\otimes\widehat{\phi}^* K_{X'})\,.
$$
The projection formula (see \cite[p.~124, Ex.~5.1(d)]{Ha}) gives that $\widehat{\phi}_*(E'
\otimes\widehat{\phi}^* K_{X'})\,=\,(\widehat{\phi}_*E')\otimes K_{X'}$, and hence we have
$$
\widehat{\phi}_*D'\, :\, \widehat{\phi}_*E'\, \longrightarrow\,
(\widehat{\phi}_*E')\otimes K_{X'}\,.
$$
It is straightforward to check that $\widehat{\phi}_*D'$ is a connection on
$\widehat{\phi}_*E'$. This connection $\widehat{\phi}_*D'$ on $\widehat{\phi}_*E'$
extends to a singular connection on $\phi_*E$ with poles over $S$. Indeed, this
follows immediately from the fact that the differential operator $\widehat{\phi}_*D'$
is algebraic. An upper bound
of the order of pole of this connection at any $x\, \in\, S$ is $d'(r'+1)$, where $d'$ is the
maximum of the orders of poles of $D'$ over the points of $\phi^{-1}(x)$ and $r'$ is the maximum of
the ramification orders of $\phi$ at the points of $\phi^{-1}(x)$.
\end{proof}

\begin{corollary}\label{cor1}
The direct image $\phi_*{\mathcal O}_Y$ has a natural singular connection with poles over $S$.
\end{corollary}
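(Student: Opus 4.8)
The plan is to obtain Corollary \ref{cor1} as the special case of Lemma \ref{lem2} in which the vector bundle on $Y$ is the structure sheaf. Concretely, I would take $E\,=\,{\mathcal O}_Y$ in Lemma \ref{lem2}, equipped with the trivial connection
$$
d\, :\, {\mathcal O}_Y\, \longrightarrow\, K_Y
$$
given by the de Rham differential. This $d$ is a nonsingular connection, hence \emph{a fortiori} a singular connection with poles of order $m\,=\,0$ on $S_Y$; thus it satisfies the hypotheses of Lemma \ref{lem2} with $m\,=\,0$.

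With this input, Lemma \ref{lem2} immediately produces a singular connection on the direct image $\phi_*{\mathcal O}_Y$ with poles over $S$, which is precisely the assertion of the corollary. There is no genuine obstacle in this step: the entire content is to recognize that the trivial connection on ${\mathcal O}_Y$ is an admissible input to Lemma \ref{lem2}, after which the conclusion is automatic. The only conceptual point worth flagging is that, although $d$ itself has no poles, the induced connection on $\phi_*{\mathcal O}_Y$ --- namely the operator denoted $d_\phi$ in the introduction --- may acquire poles over $S$ arising from the ramification of $\phi$. This is consistent with the extension argument in the proof of Lemma \ref{lem2}: the operator $\widehat{\phi}_*d$ is a genuine (nonsingular) connection on $\widehat{\phi}_*{\mathcal O}_{Y'}$ over the \'etale locus $X\setminus S$, and it is its algebraic extension across the points of $S$ that is permitted to be singular there.
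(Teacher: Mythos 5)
Your proof is correct and is essentially identical to the paper's own proof: the authors likewise apply Lemma \ref{lem2} to ${\mathcal O}_Y$ equipped with the trivial connection $f\,\longmapsto\, df$ given by the de Rham differential. Your added remark that the nonsingular $d$ qualifies as a singular connection with poles of order $m\,=\,0$, and that poles on $\phi_*{\mathcal O}_Y$ can still arise from ramification, is a sound gloss on the same argument.
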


\begin{proof}
Consider the trivial connection $f\, \longmapsto\, df$ on ${\mathcal O}_Y$ given by
the de Rham differential. In view of Lemma \ref{lem2}, this connection produces
a singular connection on $\phi_*{\mathcal O}_Y$ with poles over $S$.
\end{proof}

\begin{proposition}\label{prop1}
The map $\phi$ in \eqref{e2} is \'etale if and only if the (possibly singular) connection
on $\phi_*{\mathcal O}_Y$ obtained in Corollary \ref{cor1} is actually nonsingular.
\end{proposition}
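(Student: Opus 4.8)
The plan is to reduce the statement to a purely local computation at the points of the branch locus $S$, exploiting the fact that the connection $d_\phi$ produced in Corollary \ref{cor1} is, by its very construction in Lemma \ref{lem2}, already nonsingular over $X'\,=\,X\setminus S$. Thus $d_\phi$ is nonsingular if and only if it has no pole at any point of $S$, and the whole question is local around each $x\,\in\, S$. Since $\phi$ is \'etale precisely when $S\,=\,\emptyset$, one direction is immediate: if $\phi$ is \'etale, then Corollary \ref{cor1} produces a connection with poles over $S\,=\,\emptyset$, which is to say a nonsingular connection.

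For the converse I would argue by contraposition and show that ramification forces a genuine pole. Suppose $\phi$ is not \'etale, so that $S\,\neq\,\emptyset$; fix $x\,\in\, S$ and a point $y\,\in\,\phi^{-1}(x)$ at which $\phi$ ramifies, with ramification index $e\,\geq\,2$. Choose local coordinates $t$ on $X$ at $x$ and $s$ on $Y$ at $y$, so that $\widehat{\mathcal O}_{X,x}\,=\,k[[t]]\,\hookrightarrow\, k[[s]]\,=\,\widehat{\mathcal O}_{Y,y}$ with $t\,=\,f(s)$ and $\mathrm{ord}_s(f)\,=\,e$. Separability of $\phi$ guarantees $f'(s)\,\neq\,0$, while ramification ($e\,\geq\,2$) forces $\mathrm{ord}_s(f')\,\geq\,1$ in every characteristic (both in the tame case, where $\mathrm{ord}_s(f')\,=\,e-1$, and in the wild case, where it is even larger).

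Next I would compute the connection directly on a suitable local section of $\phi_*{\mathcal O}_Y$. Completing at $x$, the direct image $\phi_*{\mathcal O}_Y$ is free over $k[[t]]$ with basis $1,\,s,\,\ldots,\,s^{e-1}$ contributed by $y$, together with the analogous bases at the other points of $\phi^{-1}(x)$. On $Y$ the de Rham differential satisfies $d(t)\,=\,f'(s)\,ds$, so away from $y$ we may write $ds\,=\,f'(s)^{-1}\,dt$; since $d_\phi$ is the extension across $x$ of the connection $\widehat{\phi}_*d$ on $X'$ furnished by Lemma \ref{lem2}, its value on the local section $s$ of $\phi_*{\mathcal O}_Y$ is
\begin{equation*}
d_\phi(s) \,=\, \frac{1}{f'(s)}\, dt\, ,
\end{equation*}
whose coefficient $f'(s)^{-1}$ lies in $k((s))\setminus k[[s]]$ because $\mathrm{ord}_s(f')\,\geq\,1$. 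Hence $d_\phi(s)$ has a pole at $x$, so $d_\phi$ does not induce a nonsingular connection, completing the contrapositive.

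The main obstacle is to make this pole computation uniform across all characteristics, since in positive characteristic the cover can be wildly ramified ($p\,\mid\,e$) and the tame normalization $t\,=\,s^e$ is unavailable. The key observation that sidesteps this is that the single basis element $s$ already detects the pole: its image $f'(s)^{-1}\,dt$ has numerator of order $0$ while $f'(s)$ has strictly positive order as soon as $e\,\geq\,2$, with no possible cancellation, so the pole persists whether the ramification is tame or wild. A minor point to verify carefully is that the local description of $\phi_*{\mathcal O}_Y$ and of $d_\phi$ behaves correctly when $\phi^{-1}(x)$ contains several points, but there the connection splits as a direct sum over the points of the fiber and the ramified summand alone produces the pole.
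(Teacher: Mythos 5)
Your proof is correct and is essentially the paper's own argument: the \'etale direction is immediate from $S\,=\,\emptyset$, and for the converse both you and the paper exhibit, at a ramification point $y$, a local function with nonvanishing differential at $y$ (your uniformizer $s$, the paper's $f$) whose image under $d_\phi$ fails to be a regular section of $(\phi_*{\mathcal O}_Y)\otimes K_X$ --- the paper phrases the obstruction coordinate-freely, namely that every pulled-back form $\phi^*\omega$ vanishes at $y$, which is exactly your statement $\mathrm{ord}_s(f')\,\geq\,1$ in all characteristics. The only detail worth making explicit in your version is the passage between formal sections at $x$ and actual local sections of $\phi_*{\mathcal O}_Y$ (harmless: the polar obstruction map into $\bigl((\phi_*{\mathcal O}_Y)\otimes K_X\otimes{\mathcal O}_X(mS)\bigr)/\bigl((\phi_*{\mathcal O}_Y)\otimes K_X\bigr)$ is ${\mathcal O}_X$-linear, so it can be checked after faithfully flat completion, or one may take $s$ to be an honest function on $\phi^{-1}(U)$ that is a uniformizer at $y$).
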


\begin{proof}
Let $d_\phi$ denote the singular connection on $\phi_*{\mathcal O}_Y$
obtained in Corollary \ref{cor1}.

If the map $\phi$ is \'etale, then $S$ in Corollary \ref{cor1} is the zero divisor.
So in that case $d_\phi$ is actually nonsingular.

To prove the converse, assume that $\phi$ is not \'etale. Take a point $y\, \in\, Y$
where the differential of $\phi$ vanishes. Fix a Zariski
open neighborhood $U\, \subsetneq\, X$ of $\phi(y)$. Let $f$ be a function defined on
$\phi^{-1}(U)$ such that $df(y)\, \not=\, 0$. If $\omega$ is a $1$-form on $U$, then 
$$
\phi^*\omega \, \in\, H^0(\phi^{-1}(U),\, K_{\phi^{-1}(U)})
$$
vanishes at $y$, because the differential of $\phi$ vanishes at $y$. Consequently,
$df$ does not lie in the image of the homomorphism
\begin{equation}\label{f1}
H^0(U,\, K_U) \, \longrightarrow\, H^0(\phi^{-1}(U),\, K_{\phi^{-1}(U)}),\ \ \,
\omega\, \longmapsto \phi^*\omega\, .
\end{equation}

Let $$\widehat{f} \, \in\, H^0\left(U,\, \phi_*{\mathcal O}_Y\big\vert_U\right)$$ be
the section corresponding to $f$. Since $df$ does not lie in the image of the homomorphism
in \eqref{f1}, we conclude that
$$
d_\phi(\widehat{f})\, \notin\, H^0\left(U,\, \left(\phi_*{\mathcal O}_Y\right)\big\vert_U
\otimes K_U\right)\, .
$$
Consequently, the connection $d_\phi$ is definitively singular at the point $\phi(y)$.
\end{proof}

\section{Characterization of the genuinely ramified maps}

Take $X$, $Y$ and $\phi$ as in \eqref{e2}.
For the singular connection on $\phi_*{\mathcal O}_Y$ in Corollary \ref{cor1}, let
\begin{equation}\label{e3}
\textbf{R}(\phi_*{\mathcal O}_Y)\, \subset\, \phi_*{\mathcal O}_Y
\end{equation}
be the maximal regular subsheaf (see Definition \ref{def1}).

Since
$$
H^0(Y,\, \text{Hom}({\mathcal O}_Y,\, {\mathcal O}_Y))\,=\,
H^0(Y,\, \text{Hom}(\phi^*{\mathcal O}_X,\, {\mathcal O}_Y))\,=\,
H^0(X,\, \text{Hom}({\mathcal O}_X,\, \phi_*{\mathcal O}_Y))
$$
(see \cite[p.~110]{Ha}), the identity map of ${\mathcal O}_Y$ produces a
nonzero homomorphism
\begin{equation}\label{e4}
\iota\,\,:\,\, {\mathcal O}_X\,\, \hookrightarrow\,\, \phi_*{\mathcal O}_Y\, .
\end{equation}

\begin{remark}\label{rem2}\mbox{}
\begin{enumerate}
\item The coherent subsheaf ${\mathcal O}_X\, \subset\, \phi_*{\mathcal O}_Y$ in
\eqref{e4} is actually a subbundle. Indeed, this follows immediately from the
fact that for any Zariski open neighborhood $U\, \subset\, X$
of any $x\, \in\, X$, and any $f\, \in\, H^0(U,\, {\mathcal O}_U)$ with $f(x)\, \not=\, 0$,
the section $f\circ\phi\, \in\, H^0(\phi^{-1}(U),\, {\mathcal O}_{\phi^{-1}(U)})$ does
not vanish on any point of $\phi^{-1}(x)$.

\item When the characteristic of the base field $k$ is zero, then $\phi_*{\mathcal O}_Y$ actually
splits as ${\mathcal O}_X\oplus T^*_\phi$, where $T_\phi$ is known as the Tschirnhausen bundle
(see \cite{CLV}). We note that $\phi_*{\mathcal O}_Y$ does not split in general when the characteristic
of $k$ is positive. For example, take $k$ to be of characteristic two, and take $\phi$ to be a nontrivial
\'etale covering of degree two. Then $\phi_*{\mathcal O}_Y$ is a nontrivial extension of a degree zero
line bundle by ${\mathcal O}_X$. Indeed, this follows immediately from the fact that
the group ring $k[{\mathbb Z}/2{\mathbb Z}]$ is not completely reducible as a ${\mathbb Z}/2{\mathbb Z}$ module; the
submodule $$\{\lambda\cdot 1+\lambda\cdot g\,\, \mid\,\, \lambda\, \in\, k\}\,\, \subset\,\, k[{\mathbb Z}/2{\mathbb Z}],$$
where ${\mathbb Z}/2{\mathbb Z}\,=\, \{1,\, g\}$, does not have a complement.
\end{enumerate}
\end{remark}

\begin{proposition}\label{prop2}
The subsheaf ${\mathcal O}_X$ in \eqref{e4} is contained in the
subsheaf ${\bf R}(\phi_*{\mathcal O}_Y)$ in \eqref{e3}.
\end{proposition}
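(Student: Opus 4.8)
The plan is to show that the rank-one subsheaf ${\mathcal O}_X\hookrightarrow\phi_*{\mathcal O}_Y$ of \eqref{e4} is preserved by $d_\phi$ and that the induced operator has no poles; the maximality built into Lemma \ref{lem1} (equivalently, Definition \ref{def1}) then yields the inclusion ${\mathcal O}_X\subset{\bf R}(\phi_*{\mathcal O}_Y)$ at once. So the whole task reduces to a local computation of $d_\phi$ on a distinguished generator of ${\mathcal O}_X$.

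First I would pin down that generator. Unwinding the adjunction isomorphisms used to produce $\iota$ in \eqref{e4}, the homomorphism ${\mathcal O}_X\to\phi_*{\mathcal O}_Y$ attached to the identity of ${\mathcal O}_Y$ is exactly the unit map ${\mathcal O}_X\to\phi_*\phi^*{\mathcal O}_X=\phi_*{\mathcal O}_Y$, which sends a local function $g$ to $g\circ\phi$. In particular $1\in{\mathcal O}_X$ maps to the constant function $\widehat{1}=1\in H^0(Y,\,{\mathcal O}_Y)$, so ${\mathcal O}_X$ is precisely the subsheaf of $\phi_*{\mathcal O}_Y$ locally generated by $\widehat{1}$.

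Next I would compute $d_\phi(\widehat{1})$. Over the \'etale locus $X'=X\setminus S$ the connection $d_\phi$ is, by the construction in Lemma \ref{lem2} and Corollary \ref{cor1}, the direct image $\widehat{\phi}_*d$ of the de Rham differential on ${\mathcal O}_{Y'}$. Under the projection-formula identification $(\widehat{\phi}_*{\mathcal O}_{Y'})\otimes K_{X'}=\widehat{\phi}_*({\mathcal O}_{Y'}\otimes\widehat{\phi}^*K_{X'})=\widehat{\phi}_*({\mathcal O}_{Y'}\otimes K_{Y'})$, this operator carries a local function to its differential, so $d_\phi(\widehat{1})$ corresponds to $d(1)=0$ on $Y'$; that is, $d_\phi(\widehat{1})=0$ over $X'$. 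Since $\phi$ is a finite flat morphism between smooth curves, $\phi_*{\mathcal O}_Y$ is locally free, and thus $d_\phi(\widehat{1})$ is a section of the locally free sheaf $(\phi_*{\mathcal O}_Y)\otimes K_X\otimes{\mathcal O}_X(mS)$ that vanishes on the dense open subset $X'$; hence it vanishes identically, giving $d_\phi(\widehat{1})=0$ on all of $X$.

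Finally I would conclude via the Leibniz rule. For a local function $f$ on $X$, equation \eqref{e1} gives $d_\phi(f\widehat{1})=f\,d_\phi(\widehat{1})+\widehat{1}\otimes df=\widehat{1}\otimes df$, which lies in ${\mathcal O}_X\otimes K_X\subset(\phi_*{\mathcal O}_Y)\otimes K_X$. Therefore $d_\phi({\mathcal O}_X)\subset{\mathcal O}_X\otimes K_X$, so $d_\phi$ restricts to a genuine, pole-free connection on ${\mathcal O}_X$, and the maximality in Lemma \ref{lem1} forces ${\mathcal O}_X\subset{\bf R}(\phi_*{\mathcal O}_Y)$. The only point that needs care, and the nearest thing to an obstacle, is verifying that the vanishing of $d_\phi(\widehat{1})$ over $X'$ persists across the ramification divisor $S$; I would handle this by the vanishing-on-a-dense-open argument together with local freeness of $\phi_*{\mathcal O}_Y$, rather than by any delicate local analysis of the poles along $S$.
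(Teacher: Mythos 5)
Your proof is correct and is essentially the paper's own argument: the paper proves the same inclusion by observing that $\iota$ in \eqref{e4} intertwines the de Rham connection $d_X$ with $d_\phi$ (the commutative diagram \eqref{o1}) and then invoking the maximality in Lemma \ref{lem1}, which is exactly what your computation $d_\phi(f\widehat{1})=\widehat{1}\otimes df$ establishes. Your explicit verification on the generator $\widehat{1}$, with the torsion-freeness/density argument to extend the vanishing of $d_\phi(\widehat{1})$ across $S$, simply makes precise the step the paper summarizes as ``Using this it follows that the following diagram is commutative.''
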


\begin{proof}
Let $d_X$ (respectively, $d_Y$) denote the connection on ${\mathcal O}_X$ (respectively,
${\mathcal O}_Y$) given by the de Rham differential on $X$ (respectively, $Y$). The natural
isomorphism $\phi^*{\mathcal O}_X\, \stackrel{\sim}{\longrightarrow}\, {\mathcal O}_Y$
takes the pulled back connection $\phi^* d_X$ on $\phi^*{\mathcal O}_X$ to the connection
$d_Y$ on ${\mathcal O}_Y$. Using this it follows that the following diagram is commutative:
\begin{equation}\label{o1}
\begin{matrix}
{\mathcal O}_X & \stackrel{\iota}{\longrightarrow} & \phi_*{\mathcal O}_Y\\
\,\,\, \Big\downarrow d_X && \,\,\, \Big\downarrow {\widetilde d}_Y\\
K_X & \stackrel{\iota\otimes \xi}{\longrightarrow} &
(\phi_*{\mathcal O}_Y)\otimes K_X\otimes{\mathcal O}_X(\Delta)
\end{matrix}
\end{equation}
where
\begin{itemize}
\item $\iota$ is the homomorphism in \eqref{e4},

\item ${\widetilde d}_Y$ is the (singular) connection on $\phi_*{\mathcal O}_Y$ 
obtained in Corollary \ref{cor1} from the nonsingular connection $d_Y$ on ${\mathcal O}_Y$,

\item $\Delta$ is the polar divisor for ${\widetilde d}_Y$, and

\item $\xi\, :\, K_X\, \longrightarrow\, K_X\otimes{\mathcal O}_X(\Delta)$ is the natural homomorphism.
\end{itemize}
In view of the commutativity of \eqref{o1}, from the definition of the maximal regular
subsheaf $\textbf{R}(\phi_*{\mathcal O}_Y)$ it follows immediately that
${\mathcal O}_X\, \subset\, \textbf{R}(\phi_*{\mathcal O}_Y)$.
\end{proof}

Let
\begin{equation}\label{d-2}
0 \, \subsetneq\, H_1\, \subset\,
\cdots\, \subset\, H_{b-1}\, \subset\, H_b\,=\, \phi_*{\mathcal O}_Y
\end{equation}
be the Harder--Narasimhan filtration of $\phi_*{\mathcal O}_Y$ (see \cite[\S~1.3]{HL}). We note that $H_1$ is
the unique maximal semistable (also called the maximal destabilizing) subbundle of $\phi_*{\mathcal O}_Y$.

Let
\begin{equation}\label{d-3}
0 \, \subsetneq\, G_1\, \subset\,
\cdots\, \subset\, G_{c-1}\, \subset\, G_c\,=\, \textbf{R}(\phi_*{\mathcal O}_Y)
\end{equation}
be the Harder--Narasimhan filtration of $\textbf{R}(\phi_*{\mathcal O}_Y)$. As before,
$G_1$ is the unique maximal semistable subbundle of $\textbf{R}(\phi_*{\mathcal O}_Y)$.

\begin{proposition}\label{prop-1}
The subsheaf $G_1\, \subset\, \textbf{R}(\phi_*{\mathcal O}_Y)\, \subset\,\phi_*{\mathcal O}_Y$
in \eqref{d-3} coincides with the subsheaf $H_1$ in \eqref{d-2}.
\end{proposition}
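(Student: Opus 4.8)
The plan is to reduce everything to the single inclusion $H_1\,\subseteq\, \textbf{R}(\phi_*{\mathcal O}_Y)$, after which the equality $G_1\,=\,H_1$ is pure Harder--Narasimhan bookkeeping. First note that, since $G_1\,\subseteq\,\textbf{R}(\phi_*{\mathcal O}_Y)\,\subseteq\,\phi_*{\mathcal O}_Y$ and $G_1$ is semistable (being the maximal destabilizing subsheaf of $\textbf{R}(\phi_*{\mathcal O}_Y)$), it is a semistable subsheaf of $\phi_*{\mathcal O}_Y$, so $\mu(G_1)\,\le\,\mu_{\max}(\phi_*{\mathcal O}_Y)\,=\,\mu(H_1)$. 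If we also know $H_1\,\subseteq\,\textbf{R}(\phi_*{\mathcal O}_Y)$, then $\mu(H_1)\,\le\,\mu_{\max}(\textbf{R}(\phi_*{\mathcal O}_Y))\,=\,\mu(G_1)$, and hence $\mu(G_1)\,=\,\mu(H_1)$. Writing $\mu_0$ for this common slope, $G_1$ is a semistable subsheaf of $\phi_*{\mathcal O}_Y$ of slope $\mu_0\,=\,\mu_{\max}(\phi_*{\mathcal O}_Y)$, hence contained in the maximal destabilizing subsheaf $H_1$ (which contains every semistable subsheaf of maximal slope); symmetrically, $H_1\,\subseteq\,\textbf{R}(\phi_*{\mathcal O}_Y)$ is a semistable subsheaf of slope $\mu_0\,=\,\mu_{\max}(\textbf{R}(\phi_*{\mathcal O}_Y))$, hence contained in $G_1$. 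This forces $G_1\,=\,H_1$.

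The crux is therefore the inclusion $H_1\,\subseteq\,\textbf{R}(\phi_*{\mathcal O}_Y)$, equivalently that $d_\phi$ induces a \emph{nonsingular} connection on $H_1$, and I expect this to be the main obstacle. I would stress that it \emph{cannot} be obtained by a naive slope argument: the second fundamental form of $H_1$ for $d_\phi$ is an ${\mathcal O}_X$-linear map $H_1\,\longrightarrow\,(\phi_*{\mathcal O}_Y/H_1)\otimes K_X\otimes{\mathcal O}_X(\Delta)$, and the twist by $K_X\otimes{\mathcal O}_X(\Delta)$ destroys the inequality $\mu_{\max}(\phi_*{\mathcal O}_Y/H_1)\,<\,\mu(H_1)$, so there is no formal reason for it to vanish. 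Indeed, a connection need not preserve the Harder--Narasimhan filtration in general, so $H_1$ need not even be $d_\phi$-invariant for slope reasons alone.

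Instead I would use the structural description of $H_1$ coming from \cite{BP}. Since $\phi_*{\mathcal O}_Y$ is a sheaf of ${\mathcal O}_X$-algebras and $H_1$ is its maximal semistable subbundle (of slope $0$), $H_1$ is in fact an ${\mathcal O}_X$-subalgebra, and $Y_1\,:=\,\mathrm{Spec}_X H_1$ defines a factorization $Y\,\stackrel{\pi}{\longrightarrow}\,Y_1\,\stackrel{\psi}{\longrightarrow}\,X$ in which $\psi$ is \'etale and $H_1\,=\,\psi_*{\mathcal O}_{Y_1}$. Granting this, the inclusion $H_1\,\subseteq\,\textbf{R}(\phi_*{\mathcal O}_Y)$ becomes transparent: because $d_Y(\pi^*g)\,=\,\pi^*(d_{Y_1}g)$, the inclusion $\psi_*{\mathcal O}_{Y_1}\,\hookrightarrow\,\phi_*{\mathcal O}_Y$ intertwines the connection $d_\psi$ produced by Corollary \ref{cor1} for $\psi$ with $d_\phi$; and since $\psi$ is \'etale, Proposition \ref{prop1} shows that $d_\psi$ is nonsingular. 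Thus $d_\phi$ restricts to a nonsingular connection on $H_1$, so $H_1\,\subseteq\,\textbf{R}(\phi_*{\mathcal O}_Y)$ by the maximality in Definition \ref{def1}, and the bookkeeping of the first paragraph finishes the proof. The only input I am importing from outside the excerpt is the identification of $H_1$ with the pushforward of the structure sheaf of the maximal \'etale subcover; everything else is internal.
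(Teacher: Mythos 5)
Your proposal is correct and takes essentially the same route as the paper: the crux $H_1\,\subseteq\, \textbf{R}(\phi_*{\mathcal O}_Y)$ is established, exactly as in the paper's proof, by identifying $H_1$ with the pushforward of the structure sheaf of an \'etale subcover (imported from the proof of \cite[Proposition 2.6]{BP}), observing that the inclusion intertwines the two direct-image connections, and invoking Proposition \ref{prop1}, after which Harder--Narasimhan formalities give $G_1\,=\,H_1$. The only differences are cosmetic: you deduce $\mu(G_1)\,=\,\mu(H_1)$ from the two mutual inclusions rather than from $\mathrm{degree}(H_1)\,=\,0$ as in \cite[(2.7)]{BP}, and your uniform use of the (possibly trivial) \'etale factorization avoids the paper's case split between $H_1\,=\,{\mathcal O}_X$ and ${\mathcal O}_X\,\subsetneq\, H_1$.
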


\begin{proof}
We know that
\begin{equation}\label{d-1}
\text{degree}(H_1)\,=\, 0
\end{equation}
(see \cite[p.~12825, (2.7)]{BP}). So $\text{degree}(G_1)\,\leq\, 0$, because
$\textbf{R}(\phi_*{\mathcal O}_Y)\, \subset\,\phi_*{\mathcal O}_Y$. On the other hand,
$\text{degree}(G_1)\,\geq\, 0$, because ${\mathcal O}_X\, \subset\, \textbf{R}
(\phi_*{\mathcal O}_Y)$ (see Proposition \ref{prop2}). These together imply that
\begin{equation}\label{g-1}
\text{degree}(G_1)\,=\, 0 .
\end{equation}
In view of \eqref{d-1} and \eqref{g-1}, from the properties of the
Harder--Narasimhan filtration we conclude that
\begin{equation}\label{g-2}
G_1\, \subset\, H_1.
\end{equation}

We will now show that
\begin{equation}\label{g-3}
H_1\, \subset\, G_1.
\end{equation}

Recall that ${\mathcal O}_X\, \subset\, H_1\, \subset\, \phi_*{\mathcal O}_Y$. First assume that
$H_1\,=\, {\mathcal O}_X$. But we have ${\mathcal O}_X\, \subset\, G_1$, because ${\mathcal O}_X\, \subset\,
\textbf{R} (\phi_*{\mathcal O}_Y)$ (see Proposition \ref{prop2}) and \eqref{g-1} holds. Therefore, in this
case \eqref{g-3} holds.

Next assume that ${\mathcal O}_X\, \subsetneq\, H_1$. Then there is a nontrivial \'etale covering
$$
g\, :\, \widetilde{X}\, \longrightarrow\, X
$$
and a morphism $h\, :\, Y\, \longrightarrow\,\widetilde{X}$ such that
\begin{enumerate}
\item $g\circ h\,=\, \phi$, and

\item the subsheaf
\begin{equation}\label{g-4}
\iota\,:\, g_*{\mathcal O}_{\widetilde{X}}\, \hookrightarrow \, \phi_*{\mathcal O}_Y,
\end{equation}
given by the factoring of $\phi$ in (1), coincides with $H_1$.
\end{enumerate}
(See the proof of $(2)\, \implies\, (1)$ in the proof of \cite[Proposition 2.6]{BP}.) Since
$g$ is \'etale, from Proposition \ref{prop1} it follows that the connection on
$g_*{\mathcal O}_{\widetilde{X}}$ obtained in Corollary \ref{cor1} is nonsingular.
On the other hand, homomorphism $\iota$ in \eqref{g-4} is clearly connection preserving. In other words, the
following diagram is commutative:
$$
\begin{matrix}
g_*{\mathcal O}_{\widetilde{X}} & \stackrel{\iota}{\longrightarrow} & \phi_*{\mathcal O}_Y\\
\Big\downarrow && \,\,\,\Big\downarrow d_\phi\\
(g_*{\mathcal O}_{\widetilde{X}})\otimes K_X & \stackrel{\iota\otimes \xi}{\longrightarrow} &
(\phi_*{\mathcal O}_Y)\otimes K_X\otimes{\mathcal O}_X(\Delta)
\end{matrix}
$$
where $\Delta$ is the polar divisor for the singular connection ${\widetilde d}_Y$ (see \eqref{o1}) and
$\xi$ is the homomorphism in \eqref{o1}. Therefore, we conclude that
$$
g_*{\mathcal O}_{\widetilde{X}}\, \subset\,\textbf{R}(\phi_*{\mathcal O}_Y).
$$
This again implies \eqref{g-3}.
\end{proof}

Fix a base point $y_0\, \in\, Y$.
The morphism $\phi$ in \eqref{e2} is called a genuinely ramified map if the corresponding
homomorphism of \'etale fundamental groups
$$
\phi_*\,\,:\,\,\pi_1^{\rm et}(Y,\, y_0) \, \longrightarrow\, \pi_1^{\rm et}(X,\, \phi(y_0))
$$
is surjective \cite{BP}.

{}From \eqref{g-1} and Proposition \ref{prop2} it follows that
\begin{equation}\label{g-5}
{\mathcal O}_X\, \hookrightarrow\, G_1.
\end{equation}

\begin{corollary}\label{cor2}
The map $\phi$ is genuinely ramified if and only if the inclusion map
${\mathcal O}_X\, \hookrightarrow\, G_1$ in \eqref{g-5} is an isomorphism.
\end{corollary}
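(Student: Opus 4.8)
The plan is to reduce Corollary \ref{cor2} to the known equivalence between genuine ramification and condition (4) from the introduction, namely that the maximal semistable subbundle of $\phi_*\mathcal{O}_Y$ is exactly $\mathcal{O}_X$. The key observation is that Proposition \ref{prop-1} has already done the essential work: it identifies the maximal semistable subbundle $G_1$ of $\mathbf{R}(\phi_*\mathcal{O}_Y)$ with the maximal semistable subbundle $H_1$ of $\phi_*\mathcal{O}_Y$. Since the definition of genuinely ramified is that $\phi_*$ is surjective, and this is equivalent (by \cite{BP}, recalled as condition (4)) to $H_1 = \mathcal{O}_X$, the corollary will follow once I tie the inclusion in \eqref{g-5} to this equality.

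First I would invoke Proposition \ref{prop-1} to write $G_1 = H_1$. Then the inclusion $\mathcal{O}_X \hookrightarrow G_1$ in \eqref{g-5} is literally the inclusion $\mathcal{O}_X \hookrightarrow H_1$, where $H_1$ is the maximal semistable subbundle of $\phi_*\mathcal{O}_Y$. Thus the inclusion in \eqref{g-5} is an isomorphism if and only if $H_1 = \mathcal{O}_X$, i.e. if and only if the maximal semistable subbundle of $\phi_*\mathcal{O}_Y$ equals $\mathcal{O}_X$. Second, I would cite the equivalence of conditions from \cite{BP} stated in the introduction — specifically that $\phi$ is genuinely ramified if and only if condition (4) holds, that the maximal semistable subbundle of $\phi_*\mathcal{O}_Y$ is $\mathcal{O}_X$. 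Chaining these two equivalences gives that $\phi$ is genuinely ramified if and only if \eqref{g-5} is an isomorphism, which is exactly the assertion.

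Concretely, the argument runs in both directions through the equality $G_1 = H_1$. For the forward direction, if $\phi$ is genuinely ramified then $H_1 = \mathcal{O}_X$ by condition (4), hence $G_1 = \mathcal{O}_X$ by Proposition \ref{prop-1}, so the inclusion \eqref{g-5} is onto. For the reverse, if \eqref{g-5} is an isomorphism then $\mathcal{O}_X = G_1 = H_1$, so the maximal semistable subbundle of $\phi_*\mathcal{O}_Y$ is $\mathcal{O}_X$, which by condition (4) forces $\phi$ to be genuinely ramified.

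This proof is essentially a bookkeeping step, so there is no serious analytic or geometric obstacle remaining; the real content has been pushed into Proposition \ref{prop-1}, whose proof relied on the Harder--Narasimhan degree computation and the factorization of a destabilizing subsheaf through an \'etale cover. The only point requiring a little care is to be explicit that $H_1$ is the \emph{maximal semistable} (maximal destabilizing) subbundle referred to in condition (4), matching the identification in Proposition \ref{prop-1}; once that identification of $G_1$ with $H_1$ is in hand, the corollary is immediate.
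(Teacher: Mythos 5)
Your proof is correct and follows exactly the paper's own argument: invoke Proposition \ref{prop-1} to identify $G_1$ with $H_1$, then apply the characterization from \cite{BP} that $\phi$ is genuinely ramified if and only if $H_1 = {\mathcal O}_X$. Your version merely spells out both directions of what the paper dispatches in two sentences.
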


\begin{proof}
{}From \cite[p.~12828, Proposition 2.6]{BP} we know that $\phi$ is genuinely ramified
if and only if $H_1\, =\, {\mathcal O}_X$, where $H_1$ is the subsheaf in \eqref{d-2}.
So the result follows immediately from Proposition \ref{prop-1}.
\end{proof}

\begin{theorem}\label{thm1}
Assume that the characteristic of the base field $k$ is zero.
The morphism $\phi$ in \eqref{e2} is genuinely ramified if and only if 
the inclusion map
$${\mathcal O}_X\, \hookrightarrow\, {\bf R}(\phi_*{\mathcal O}_Y)$$
in Proposition \ref{prop2} is an isomorphism.
\end{theorem}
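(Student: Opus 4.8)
The plan is to bootstrap from Corollary \ref{cor2}, which already characterizes genuine ramification by the condition that the maximal semistable subbundle $G_1$ of ${\bf R}(\phi_*{\mathcal O}_Y)$ equals ${\mathcal O}_X$. Recall the chain ${\mathcal O}_X\, \subset\, G_1\, \subset\, {\bf R}(\phi_*{\mathcal O}_Y)$, where the first inclusion is \eqref{g-5} and the second is the bottom step of the Harder--Narasimhan filtration \eqref{d-3}. One implication is then immediate: if the inclusion of Proposition \ref{prop2} is an isomorphism, i.e.\ ${\mathcal O}_X\, =\, {\bf R}(\phi_*{\mathcal O}_Y)$, then the chain collapses to force $G_1\, =\, {\mathcal O}_X$, and Corollary \ref{cor2} gives that $\phi$ is genuinely ramified. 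The content therefore lies in the converse.

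For the converse, assume $\phi$ is genuinely ramified; by Corollary \ref{cor2} we have $G_1\, =\, {\mathcal O}_X$. Write $W\, :=\, {\bf R}(\phi_*{\mathcal O}_Y)$. I would first record two structural facts about $W$. Since $W$ is a coherent subsheaf of the vector bundle $\phi_*{\mathcal O}_Y$ on the smooth curve $X$, it is torsionfree and hence locally free. More importantly, by the very definition of the maximal regular subsheaf (Definition \ref{def1}), the singular connection $d_\phi$ restricts to a genuine nonsingular connection on $W$. This is the point at which the characteristic-zero hypothesis is used: a vector bundle carrying a nonsingular connection has every direct summand of degree zero by the Weil--Atiyah criterion recalled in Section 2, and applying this to $W$ itself yields $\text{degree}(W)\, =\, 0$.

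I would finish by playing this degree against the Harder--Narasimhan filtration \eqref{d-3} of $W$. Its bottom piece is $G_1\, =\, {\mathcal O}_X$, of slope $0$, so the top slope of $W$ is $0$ and every later successive quotient has strictly smaller, hence strictly negative, slope. If the filtration had length at least two, the equality $\text{degree}(W)\, =\, \text{degree}(G_1)\, +\, \sum_{i\geq 2}\text{degree}(G_i/G_{i-1})$ would express $0$ as a sum of $0$ and strictly negative terms, a contradiction. Hence the filtration has length one and $W\, =\, G_1\, =\, {\mathcal O}_X$, which is precisely the desired isomorphism.

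The step I expect to be the crux, and the only place where $\text{char}\, k\, =\, 0$ is genuinely needed, is the vanishing $\text{degree}({\bf R}(\phi_*{\mathcal O}_Y))\, =\, 0$. In positive characteristic the analogous criterion only forces the degree to be a multiple of $p$, so the negative-slope tail of the Harder--Narasimhan filtration can survive provided its degrees balance to a multiple of $p$; this is exactly the phenomenon responsible for the failure of the statement recorded in Remark \ref{re1}. Everything else is formal manipulation of slopes and of the inclusions ${\mathcal O}_X\, \subset\, G_1\, \subset\, W$.
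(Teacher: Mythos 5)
Your proof is correct, and at its crux it coincides with the paper's own argument: the substantive direction (genuine ramification forces ${\mathcal O}_X\,=\,{\bf R}(\phi_*{\mathcal O}_Y)$) rests in both cases on the characteristic-zero fact from \cite{BS} that a vector bundle carrying a nonsingular connection has degree zero, followed by slope bookkeeping against a Harder--Narasimhan filtration. The paper deduces ${\bf R}(\phi_*{\mathcal O}_Y)\,\subset\, H_1\,=\,{\mathcal O}_X$ directly from $\mu(H_1)\,=\,0$ and $\text{degree}({\bf R}(\phi_*{\mathcal O}_Y))\,=\,0$, while you instead run the filtration \eqref{d-3} of $W\,=\,{\bf R}(\phi_*{\mathcal O}_Y)$ itself, using $G_1\,=\,{\mathcal O}_X$ from Corollary \ref{cor2} and killing the strictly negative tail; this is the same computation in slightly different clothing, and your explicit remark that $W$ is locally free (so that \cite{BS} applies) is a point the paper leaves tacit. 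The only genuine organizational difference is in the easy direction: the paper re-runs the factorization of a non-genuinely-ramified $\phi$ through a nontrivial \'etale cover $\gamma\,:\,Z\,\longrightarrow\,X$ and shows $\gamma_*{\mathcal O}_Z\,\subset\,{\bf R}(\phi_*{\mathcal O}_Y)$ to force $\text{rank}({\bf R}(\phi_*{\mathcal O}_Y))\,>\,1$, whereas you get this implication for free by collapsing the chain ${\mathcal O}_X\,\subset\,G_1\,\subset\,{\bf R}(\phi_*{\mathcal O}_Y)$ and invoking Corollary \ref{cor2}. Since Corollary \ref{cor2} (via Proposition \ref{prop-1}) already encodes that factorization argument, your route introduces no new idea but is a tidier reuse of the previously established results, avoiding the duplication present in the paper; your diagnosis of where $\text{char}\,k\,=\,0$ enters, and why only degree divisibility by $p$ survives in positive characteristic (consistent with Remark \ref{re1}), is also accurate.
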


\begin{proof}
First assume that $\phi$ is not a genuinely ramified map. This implies that
there is a nontrivial \'etale covering
$$
\gamma\,\, :\,\, Z\, \longrightarrow\, X
$$
and a morphism $\beta\, :\, Y\, \longrightarrow\, Z$ such that $\phi\,=\, \gamma\circ\beta$
\cite[Proposition 2.6]{BP}. Since $\phi\,=\, \gamma\circ\beta$, we have
\begin{equation}\label{e5}
\iota\, \, :\,\, \gamma_*{\mathcal O}_Z\, \hookrightarrow\, \phi_*{\mathcal O}_Y\, .
\end{equation}

The possibly singular connection on $\gamma_*{\mathcal O}_Z$ (respectively,
$\phi_*{\mathcal O}_Y$) given by Corollary \ref{cor1} will be denoted by $d_\gamma$
(respectively, $d_\phi$). Since $\gamma$ is \'etale from Proposition \ref{prop1}
we know that $d_\gamma$ is nonsingular.
On the other hand, the homomorphism $\iota$ in \eqref{e5} intertwines $d_\gamma$ and
$d_\phi$, meaning the following diagram is commutative:
$$
\begin{matrix}
\gamma_*{\mathcal O}_Z & \stackrel{\iota}{\longrightarrow} & \phi_*{\mathcal O}_Y\\
\,\,\, \Big\downarrow d_\gamma && \,\,\,\Big\downarrow d_\phi\\
K_X & \stackrel{\iota\otimes \xi}{\longrightarrow} &
(\phi_*{\mathcal O}_Y)\otimes K_X\otimes{\mathcal O}_X(\Delta)
\end{matrix}
$$
where $\Delta$ is the polar divisor for $d_\phi$ and
$\xi\, :\, K_X\, \longrightarrow\,K_X\otimes {\mathcal O}_X(\Delta)$ is the natural homomorphism.
Hence we have
$$
\iota (\gamma_*{\mathcal O}_Z)\, \subset\, \textbf{R}(\phi_*{\mathcal O}_Y)\, .
$$
Since $\text{rank}(\iota (\gamma_*{\mathcal O}_Z))\,=\, \text{degree}(\gamma) \, >\, 1$ (recall that
$\gamma$ is a nontrivial \'etale covering), it
follows that $\text{rank}({\mathcal O}_X)\,< \,\text{rank}(\textbf{R}(\phi_*{\mathcal O}_Y))$.

We will now prove that ${\mathcal O}_X\, =\, \textbf{R}(\phi_*{\mathcal O}_Y)$ if
$\phi$ is a genuinely ramified map.

As before, let $d_\phi$ denote the possibly singular connection on 
$\phi_*{\mathcal O}_Y$ given by Corollary \ref{cor1}. Since $d_\phi$ induces a
nonsingular connection on $\textbf{R}(\phi_*{\mathcal O}_Y)$, and the characteristic of $k$ is zero,
we conclude that
\begin{equation}\label{ed}
\text{degree}(\textbf{R}(\phi_*{\mathcal O}_Y))\,=\, 0
\end{equation}
(see \cite[p.~ 145, Proposition 3.1]{BS}). As in \eqref{d-2}, let
$$
H_1\, \subset\, \phi_*{\mathcal O}_Y
$$
be the maximal semistable subbundle. Since $\mu(H_1)\,=\, 0$ \cite[p.~12825, (2.7)]{BP}, from \eqref{ed} it follows immediately that 
\begin{equation}\label{ed2}
\textbf{R}(\phi_*{\mathcal O}_Y)\, \subset\, H_1\, .
\end{equation}
But $H_1\,=\, {\mathcal O}_X$ if $\phi$ is a genuinely ramified map \cite[p.~12828, Definition 2.5]{BP}. So \eqref{ed2} implies that
$\textbf{R}(\phi_*{\mathcal O}_Y)\, \subset\, {\mathcal O}_X$ is $\phi$ is a genuinely ramified map. Using this and
Proposition \ref{prop2} it follows that ${\mathcal O}_X\, =\, \textbf{R}(\phi_*{\mathcal O}_Y)$ if
$\phi$ is a genuinely ramified map.
\end{proof}

\begin{remark}\label{re1}
Theorem \ref{thm1} is not valid if the assumption that the characteristic of $k$ is zero
is removed. To explain this, assume that the characteristic of $k$ is positive, and let
$$
F_Y\, :\, Y\, \longrightarrow\, Y
$$
be the absolute Frobenius morphism of $Y$. Consider the subsheaf
$$
F^{-1}_Y {\mathcal O}_Y \, \hookrightarrow\, {\mathcal O}_Y;
$$
note that it is not a coherent subsheaf. This inclusion map produces an inclusion map
$$
\phi_* F^{-1}_Y {\mathcal O}_Y \, \hookrightarrow\, \phi_* {\mathcal O}_Y\, .
$$
Let ${\mathcal W}\, \subset\, \phi_* {\mathcal O}_Y$ denote the coherent subsheaf
generated by $\phi_* F^{-1}_Y {\mathcal O}_Y$. Then it is straightforward to check
that the (singular) connection $d_\phi$ on $\phi_* {\mathcal O}_Y$ preserves
$\mathcal W$ and, furthermore, the resulting connection on $\mathcal W$ is nonsingular.
This implies that
$$
{\mathcal W}\, \subset\, {\bf R}(\phi_*{\mathcal O}_Y)\, .
$$
But clearly, $${\mathcal O}_X\, \subsetneq\, {\mathcal W}.$$
To complete the example, if we take $X\,=\, {\mathbb P}^1_k$, then $\phi$ is
genuinely ramified.
\end{remark}

\section{Pullback of irreducible connections}

Take $X$, $Y$ and $\phi$ as in \eqref{e2}.
Let $E$ be a vector bundle on $X$ equipped with a nonsingular connection $D$. The
connection $D$ induces a connection on $\phi^*E$; this induced connection will be
denoted by $\phi^*D$. The singular connection on $\phi_*\phi^*E$, obtained in
Lemma \ref{lem2} from $\phi^*D$, will be denoted by
\begin{equation}\label{ee}
\widehat{D}.
\end{equation}
Let
\begin{equation}\label{f2}
\textbf{R}(\phi_*\phi^*E)\, \hookrightarrow\, \phi_*\phi^*E
\end{equation}
be the maximal regular subsheaf for this singular connection $\widehat{D}$.

Using the projection formula, \cite[p.~124, Ex.~5.1(d)]{Ha}, we have
\begin{equation}\label{e11}
\phi_*\phi^*E\,=\, E\otimes\phi_*{\mathcal O}_Y\, .
\end{equation}
The inclusion map $\iota\, :\, {\mathcal O}_X\, \hookrightarrow\, \phi_*{\mathcal O}_Y$
in \eqref{e4} produces an injective homomorphism
\begin{equation}\label{e10}
H\, :=\, \text{Id}_E\otimes\iota \, \, :\,\, E\,=\, E\otimes {\mathcal O}_X
\, \longrightarrow\, E\otimes\phi_*{\mathcal O}_Y\,=\, \phi_*\phi^*E\,.
\end{equation}
{}From Remark \ref{rem2}(1) we know that $H(E)$ is a subbundle of $\phi_*\phi^*E$.

\begin{proposition}\label{prop3}
Assume that the characteristic of the base field $k$ is zero.
Assume that the map $\phi$ in \eqref{e2} is genuinely ramified. Then
$$
\textbf{R}(\phi_*\phi^*E)\,=\, H(E)
$$
as subsheaves of $\phi_*\phi^*E$ (see \eqref{f2} and \eqref{e10}).
\end{proposition}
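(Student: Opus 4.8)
The plan is to reduce the statement to Theorem \ref{thm1} by isolating two facts: that the induced singular connection $\widehat D$ of \eqref{ee} is a tensor product connection, and that the operation $\textbf{R}(-)$ commutes with tensoring by a bundle carrying a nonsingular connection. First I would make $\widehat D$ explicit under the projection formula identification $\phi_*\phi^*E\,=\,E\otimes\phi_*{\mathcal O}_Y$ of \eqref{e11}. Writing $d_\phi$ for the singular connection of Corollary \ref{cor1} on $\phi_*{\mathcal O}_Y$ and $D$ for the given connection on $E$, I claim
$$
\widehat D\,=\,D\otimes\mathrm{Id}_{\phi_*{\mathcal O}_Y}+\mathrm{Id}_E\otimes d_\phi\, .
$$
Over the \'etale locus $X'\,=\,X\setminus S$ this is a direct computation: under the identification $\widehat\phi_*\phi^*E'\,=\,E'\otimes\widehat\phi_*{\mathcal O}_{Y'}$ used in Lemma \ref{lem2}, the Leibniz rule for $\phi^*D$ applied to local sections of the form $(\phi^*e)\cdot f$ splits precisely into the $D$-part and the $d_\phi$-part. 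Since both $\widehat D$ and the tensor product connection are algebraic singular connections on $\phi_*\phi^*E$ agreeing on the dense open $X'$, they coincide on all of $X$.

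Next, the inclusion $H(E)\,\subseteq\,\textbf{R}(\phi_*\phi^*E)$ is immediate and uses neither hypothesis: as $D$ is nonsingular on $E$ and, by the commutative diagram \eqref{o1} in the proof of Proposition \ref{prop2}, $d_\phi$ is nonsingular on $\iota({\mathcal O}_X)\,\subseteq\,\phi_*{\mathcal O}_Y$, the tensor connection $\widehat D$ preserves $H(E)\,=\,E\otimes\iota({\mathcal O}_X)$ and is nonsingular there, so maximality of $\textbf{R}$ gives the inclusion. The whole content is therefore the reverse inclusion, which I would obtain from the general functoriality statement
$$
\textbf{R}(E\otimes N)\,=\,E\otimes\textbf{R}(N)
$$
for the fixed $(E,\,D)$ and an arbitrary bundle $N$ with singular connection $\nabla$ with poles over $S$, where $E\otimes N$ carries the tensor connection. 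The ``$\supseteq$'' inclusion is exactly as in the previous paragraph. For ``$\subseteq$'' I would work in the abelian category of coherent sheaves on $X$ equipped with singular connections with poles over $S$, morphisms being the $\mathcal O_X$-linear connection-preserving maps. The functor $N\,\longmapsto\,E\otimes N$ is an equivalence of this category, with quasi-inverse $N\,\longmapsto\,E^*\otimes N$: the evaluation $E^*\otimes E\,\longrightarrow\,\mathcal O_X$ is connection-preserving for the dual connection $D^*$, supplying the unit and counit isomorphisms. Because tensoring with the nonsingular $(E,\,D)$, or with $(E^*,\,D^*)$, neither creates nor removes poles, this equivalence carries the regular subobjects of $N$ (subsheaves preserved by $\nabla$ with nonsingular induced connection) order-isomorphically onto those of $E\otimes N$ via $N'\,\longmapsto\,E\otimes N'$; hence it sends the maximal one to the maximal one, giving the claimed equality.

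Combining the three steps yields
$$
\textbf{R}(\phi_*\phi^*E)\,=\,\textbf{R}(E\otimes\phi_*{\mathcal O}_Y)\,=\,E\otimes\textbf{R}(\phi_*{\mathcal O}_Y)\,=\,E\otimes{\mathcal O}_X\,=\,H(E),
$$
where the third equality is Theorem \ref{thm1}; this is the only point where the characteristic zero hypothesis and genuine ramification enter, through $\textbf{R}(\phi_*{\mathcal O}_Y)\,=\,{\mathcal O}_X$. I expect the reverse inclusion in the functoriality statement to be the main obstacle: the delicate point is that an extension of two nonsingular connections can itself be singular, so one cannot argue naively by a filtration, and the equivalence-of-categories argument with the dual connection is what circumvents this. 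From this viewpoint the phenomenon recorded in Remark \ref{rem1p} also appears: applying the functoriality to the induced quotient connection on $E\otimes(\phi_*{\mathcal O}_Y/{\mathcal O}_X)$ gives $\textbf{R}\bigl(E\otimes(\phi_*{\mathcal O}_Y/{\mathcal O}_X)\bigr)\,=\,E\otimes\textbf{R}(\phi_*{\mathcal O}_Y/{\mathcal O}_X)\,=\,0$, since for genuinely ramified $\phi$ the quotient $\phi_*{\mathcal O}_Y/{\mathcal O}_X$ has all Harder--Narasimhan slopes strictly negative and so admits no nonzero subsheaf on which the induced connection is nonsingular.
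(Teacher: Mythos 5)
Your overall architecture matches the paper's: you identify $\widehat D$ with the tensor-product connection $D\otimes{\rm Id}+{\rm Id}\otimes d_\phi$ under the identification \eqref{e11}, you obtain $H(E)\subseteq \textbf{R}(\phi_*\phi^*E)$ from the flatness of $\iota({\mathcal O}_X)$, and you reduce the reverse inclusion to Theorem \ref{thm1} by dualizing with $(E^*,\,D^*)$ --- the paper does exactly this, via the map $\textbf{q}'$ obtained by pairing ${\rm Id}_{E^*}\otimes\textbf{q}$ with the evaluation $E^*\otimes E\longrightarrow {\mathcal O}_X$. However, your justification of the crucial inclusion $\textbf{R}(E\otimes N)\subseteq E\otimes\textbf{R}(N)$ is wrong as written. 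The functor $N\longmapsto E\otimes N$ is \emph{not} an equivalence of the category of sheaves with singular connection once ${\rm rank}(E)\geq 2$: evaluation and coevaluation supply only the unit and counit of an adjunction, and these are not isomorphisms --- $E^*\otimes E\otimes N$ has rank ${\rm rank}(E)^2\cdot{\rm rank}(N)$, so it cannot be isomorphic to $N$. Correspondingly, your asserted order-isomorphism of lattices of regular subobjects fails: already for $N={\mathcal O}_X$ and $E={\mathcal O}_X^{\oplus 2}$, both with the trivial connection, every constant line subbundle of $E\otimes N=E$ is a regular subobject, and none of these has the form $E\otimes N'$. So the step you yourself identify as the main obstacle does not go through by the mechanism you propose.

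The lemma itself is true, and your dualization idea can be repaired into a correct proof: given $F\subseteq E\otimes N$ preserved by the tensor connection with nonsingular induced connection, the image $I:=({\rm ev}\otimes{\rm Id}_N)(E^*\otimes F)\subseteq N$ is the image of a sheaf carrying a nonsingular connection under a connection-preserving map, hence is preserved by $\nabla$ with nonsingular induced connection, so $I\subseteq\textbf{R}(N)$; then the triangle identity $({\rm Id}_E\otimes{\rm ev}\otimes{\rm Id}_N)\circ({\rm coev}\otimes{\rm Id}_{E\otimes N})={\rm Id}_{E\otimes N}$ gives $F\subseteq E\otimes I\subseteq E\otimes\textbf{R}(N)$. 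This corrected argument is essentially what the paper carries out concretely: it shows $\textbf{q}'=0$ because its source carries the nonsingular connection ${\mathcal D}_1$ while the target $(\phi_*{\mathcal O}_Y)/\iota({\mathcal O}_X)$ admits no nonzero regular subsheaf, deducing that last fact from Theorem \ref{thm1} by lifting a putative regular subsheaf to $\phi_*{\mathcal O}_Y$. Incidentally, your closing slope observation --- that for genuinely ramified $\phi$ all Harder--Narasimhan slopes of $(\phi_*{\mathcal O}_Y)/\iota({\mathcal O}_X)$ are strictly negative, while any bundle with a nonsingular connection in characteristic zero has degree zero --- is correct and gives a cleaner proof of that key fact than the paper's lifting step, which itself passes rather quickly over the extension subtlety (nonsingular sub and quotient, singular extension) that you rightly flag.
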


\begin{proof}
As before, let $d_\phi$ denote the singular connection on $\phi_*{\mathcal O}_Y$
obtained in Corollary \ref{cor1}. The connection $D$ on $E$ and this singular connection
$d_\phi$ together produce a singular connection on $E\otimes\phi_*{\mathcal O}_Y$;
this singular connection on $E\otimes\phi_*{\mathcal O}_Y$ will be denoted by $\widehat{D}_\phi$.
The identification $E\otimes\phi_*{\mathcal O}_Y\,=\, \phi_*\phi^*E$ in \eqref{e11}
takes $\widehat{D}_\phi$ to $\widehat{D}$ (see \eqref{ee}). Indeed, this follows from the
construction of $\widehat{D}$ (see Lemma \ref{lem2}).
The subsheaf $\iota({\mathcal O}_X)\, \subset\, \phi_*{\mathcal O}_Y$ (see
\eqref{e10}) is preserved by the singular connection $d_\phi$ (see Proposition \ref{prop2}
and its proof). This, and the fact that the identification
$E\otimes\phi_*{\mathcal O}_Y\,=\, \phi_*\phi^*E$ in \eqref{e11}
takes the singular connection on $E\otimes\phi_*{\mathcal O}_Y$ induced
by $D$ and $d_\phi$ to the singular connection $\widehat{D}$ on $\phi_*\phi^*E$, together
imply that the homomorphism $H$ in \eqref{e10} takes the connection $D$ (on $E$)
to $\widehat{D}$ in \eqref{ee}. In other words, the following diagram is commutative:
\begin{equation}\label{o3}
\begin{matrix}
E & \stackrel{H}{\longrightarrow} & \phi_*\phi^*E\\
\,\,\,\, \Big\downarrow D &&\,\,\,\, \Big\downarrow \widehat{D}\\
E\otimes K_X & \stackrel{H\otimes\xi}{\longrightarrow} & E\otimes K_X\otimes {\mathcal O}_X(\Delta)
\end{matrix}
\end{equation}
where $\Delta$ is the polar divisor for $\widehat{D}$ and $\xi\, :\, K_X\, \longrightarrow\,
K_X\otimes {\mathcal O}_X(\Delta)$ is the natural homomorphism; recall that $D$ is a nonsingular connection.
Now the commutativity of \eqref{o3} implies that 
\begin{itemize}
\item the subsheaf
$$
H(E)\, \subset\, \phi_*\phi^*E
$$
is preserved by the singular connection $\widehat{D}$ on $\phi_*\phi^*E$, and

\item $\widehat{D}$ induces a nonsingular connection on $H(E)$.
\end{itemize}
Consequently, we have
\begin{equation}\label{e12}
H(E)\, \subset\, \textbf{R}(\phi_*\phi^*E)\, .
\end{equation}

Note that
$$
(\phi_*\phi^*E)/H(E)\,=\, (E\otimes\phi_*{\mathcal O}_Y)/E\,=\,
E\otimes (\phi_*{\mathcal O}_Y/\iota({\mathcal O}_X))\, .
$$
Since $\phi_*{\mathcal O}_Y/\iota({\mathcal O}_X)$ is locally free (see
Remark \ref{rem2}(1)), the subsheaf
$H(E)\, \subset\, \phi_*\phi^*E$ is a subbundle. Consider the quotient map
\begin{equation}\label{e13}
\textbf{q}\,\,:\,\, \textbf{R}(\phi_*\phi^*E) \, \longrightarrow\,
(\phi_*\phi^*E)/H(E)\,=\, E\otimes (\phi_*{\mathcal O}_Y/\iota({\mathcal O}_X))\, .
\end{equation}
In view of \eqref{e12}, to prove the proposition it suffices to show that $\textbf{q}\,=\, 0$.

Since both $H(E)$ and $\textbf{R}(\phi_*\phi^*E)$ are preserved by the singular
connection $\widehat{D}$ on $\phi_*\phi^*E$,
\begin{enumerate}
\item $\widehat{D}$ induces a singular connection on $(\phi_*\phi^*E)/H(E)$ (recall that
$(\phi_*\phi^*E)/H(E)$ is locally free), and

\item the homomorphism $\textbf{q}$ in \eqref{e13} takes the nonsingular connection
on $\textbf{R}(\phi_*\phi^*E)$ (given by $\widehat{D}$) to the singular connection
on $$(\phi_*\phi^*E)/H(E)\,=\, E\otimes ((\phi_*{\mathcal O}_Y)/\iota({\mathcal O}_X))$$
induced by $\widehat{D}$.
\end{enumerate}

Let
\begin{equation}\label{e14}
\textbf{q}'\,\,:\,\, E^*\otimes \textbf{R}(\phi_*\phi^*E) \, \longrightarrow\,
(\phi_*{\mathcal O}_Y)/\iota({\mathcal O}_X)
\end{equation}
be the homomorphism obtained by composing $${\rm Id}_{E^*}\otimes \textbf{q}\, 
\,:\, E^*\otimes\textbf{R}(\phi_*\phi^*E) \, \longrightarrow\,
E^*\otimes E\otimes ((\phi_*{\mathcal O}_Y)/\iota({\mathcal O}_X))$$ (see \eqref{e13})
with the homomorphism $E^*\otimes E\otimes ((\phi_*{\mathcal O}_Y)/\iota({\mathcal O}_X))
\, \longrightarrow\, (\phi_*{\mathcal O}_Y)/\iota({\mathcal O}_X)$
constructed using the natural pairing $E^*\otimes E\, \longrightarrow\, {\mathcal O}_X$.

Let $D^*$ denote the nonsingular connection on $E^*$ given by the nonsingular connection 
$D$ on $E$. The connection $D^*$, and the connection on $\textbf{R}(\phi_*\phi^*E)$
given by $\widehat{D}$, together produce a connection on $E^*\otimes
\textbf{R}(\phi_*\phi^*E)$; this connection on $E^*\otimes
\textbf{R}(\phi_*\phi^*E)$ will be denoted by ${\mathcal D}_1$.
As noted before, The subbundle $\iota({\mathcal O}_X)\, \subset\, \phi_*{\mathcal O}_Y$
(see \eqref{e10} and Remark \ref{rem2}(1)) is preserved by the singular connection $d_\phi$.
Consequently, $d_\phi$ induces a singular connection on the quotient bundle
$(\phi_*{\mathcal O}_Y)/\iota({\mathcal O}_X)$; this singular connection on
$(\phi_*{\mathcal O}_Y)/\iota({\mathcal O}_X)$ will be denoted by ${\mathcal D}_2$. Since 
$\textbf{q}$ in \eqref{e13} takes the nonsingular connection
on $\textbf{R}(\phi_*\phi^*E)$ (given by $\widehat{D}$) to the singular connection
on $(\phi_*\phi^*E)/H(E)\,=\, E\otimes ((\phi_*{\mathcal O}_Y)/\iota({\mathcal O}_X))$, it
follows immediately that $\textbf{q}'$ in \eqref{e14} takes the above defined
singular connection ${\mathcal D}_1$ on $E^*\otimes
\textbf{R}(\phi_*\phi^*E)$ to the singular connection ${\mathcal D}_2$ on
$(\phi_*{\mathcal O}_Y)/\iota({\mathcal O}_X)$.

Now note that ${\mathcal D}_1$ is a nonsingular connection, because both
$D^*$, and the connection on $\textbf{R}(\phi_*\phi^*E)$ given by $\widehat{D}$, are
nonsingular. On the other hand, since $\phi$ is genuinely ramified, it can be shown
that $(\phi_*{\mathcal O}_Y)/\iota({\mathcal O}_X)$ does not
contain any nonzero subsheaf on which ${\mathcal D}_2$ induces a nonsingular connection.
Indeed, if ${\mathcal D}_2$ induces a nonsingular connection connection on
${\mathcal V}\, \subset\, (\phi_*{\mathcal O}_Y)/\iota({\mathcal O}_X)$, then
consider the inverse image $\widetilde{\mathcal V}\, \subset\, \phi_*{\mathcal O}_Y$
of ${\mathcal V}$ for the quotient map $\phi_*{\mathcal O}_Y\, \longrightarrow\,
(\phi_*{\mathcal O}_Y)/\iota({\mathcal O}_X)$. The singular connection $d_\phi$
on $\phi_*{\mathcal O}_Y$ induces a nonsingular connection on $\widetilde{\mathcal V}$, because
${\mathcal D}_2$ induces a nonsingular connection connection on
${\mathcal V}$. From Theorem
\ref{thm1} it follows that $\widetilde{\mathcal V}\,=\, \iota({\mathcal O}_X)$,
and hence ${\mathcal V}\,=\, 0$. So $(\phi_*{\mathcal O}_Y)/\iota({\mathcal O}_X)$ does not
contain any nonzero subsheaf on which ${\mathcal D}_2$ induces a nonsingular connection.

Since $\textbf{q}'$ in \eqref{e14} takes ${\mathcal D}_1$ to ${\mathcal
D}_2$, and ${\mathcal D}_1$ is a nonsingular connection, while
$(\phi_*{\mathcal O}_Y)/\iota({\mathcal O}_X)$ does not
contain any nonzero subsheaf on which ${\mathcal D}_2$ induces a nonsingular connection,
considering the image of $\textbf{q}'$ it follows that
$$
\textbf{q}'\,=\, 0\, .
$$
This implies that $\textbf{q}$ in \eqref{e13} vanishes identically, and hence
$H(E)\, =\, \textbf{R}(\phi_*\phi^*E)$.
\end{proof}

The following converse of Proposition \ref{prop3} was pointed out by the referee.

\begin{proposition}\label{prop-l}
Assume that the characteristic of the base field $k$ is zero.
Assume that
$$
\textbf{R}(\phi_*\phi^*E)\,=\, H(E)
$$
as subsheaves of $\phi_*\phi^*E$ (see \eqref{f2} and \eqref{e10}).
Then the map $\phi$ in \eqref{e2} is genuinely ramified.
\end{proposition}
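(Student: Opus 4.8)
The plan is to prove the contrapositive: assuming that $\phi$ is \emph{not} genuinely ramified, I would show that the inclusion $H(E)\, \subset\, \textbf{R}(\phi_*\phi^*E)$ of \eqref{e12} is strict, so that the hypothesis $\textbf{R}(\phi_*\phi^*E)\,=\, H(E)$ cannot hold. The essential input is Theorem \ref{thm1} together with the rank count appearing in its proof: when $\phi$ is not genuinely ramified, we not only have $\mathcal{O}_X\, \subsetneq\, \textbf{R}(\phi_*\mathcal{O}_Y)$, but moreover $\text{rank}(\textbf{R}(\phi_*\mathcal{O}_Y))\, >\, 1$. Indeed, this is exactly what the first half of the proof of Theorem \ref{thm1} records, via the factorization $\phi\,=\,\gamma\circ\beta$ through a nontrivial \'etale cover $\gamma\,:\,Z\,\longrightarrow\, X$ of degree greater than one.

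The key step is to exploit the tensor product structure of $\widehat{D}$. As established in the proof of Proposition \ref{prop3}, the identification $\phi_*\phi^*E\,=\, E\otimes\phi_*\mathcal{O}_Y$ of \eqref{e11} carries $\widehat{D}$ to the singular connection on $E\otimes\phi_*\mathcal{O}_Y$ built from the nonsingular connection $D$ on $E$ and the singular connection $d_\phi$ on $\phi_*\mathcal{O}_Y$. Consequently, for any subsheaf $W\,\subset\,\phi_*\mathcal{O}_Y$ that is preserved by $d_\phi$ and on which $d_\phi$ restricts to a \emph{nonsingular} connection, the subsheaf $E\otimes W\,\subset\, E\otimes\phi_*\mathcal{O}_Y\,=\,\phi_*\phi^*E$ (an honest subsheaf, since $E$ is locally free) is preserved by $\widehat{D}$, and the connection induced by $\widehat{D}$ on $E\otimes W$ is the tensor product of the nonsingular connection $D$ with the nonsingular connection $d_\phi\vert_W$, hence is itself nonsingular. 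Taking $W\,=\,\textbf{R}(\phi_*\mathcal{O}_Y)$ and invoking the defining maximality of $\textbf{R}(\phi_*\phi^*E)$, I would conclude
\[
E\otimes\textbf{R}(\phi_*\mathcal{O}_Y)\, \subset\, \textbf{R}(\phi_*\phi^*E)\, .
\]

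To finish, I would compare ranks. Writing $r\,=\,\text{rank}(E)$ and $s\,=\,\text{rank}(\textbf{R}(\phi_*\mathcal{O}_Y))$, the subbundle $H(E)\,=\,E\otimes\iota(\mathcal{O}_X)$ has rank $r$, whereas $E\otimes\textbf{R}(\phi_*\mathcal{O}_Y)$ has rank $rs$. Since $\phi$ is not genuinely ramified we have $s\,>\,1$, and therefore
\[
\text{rank}\big(\textbf{R}(\phi_*\phi^*E)\big)\,\geq\, rs\,>\, r\,=\,\text{rank}(H(E))\, .
\]
This forces $H(E)\,\subsetneq\,\textbf{R}(\phi_*\phi^*E)$, contradicting the hypothesis; hence $\phi$ must be genuinely ramified.

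The one point that deserves care --- the main obstacle --- is the assertion that $\widehat{D}$ induces a \emph{nonsingular} connection on $E\otimes W$ when $D$ is nonsingular and $d_\phi\vert_W$ is nonsingular. This is a local statement about the polar locus of a tensor product connection: in a local frame the connection form of $D\otimes\text{Id}_W+\text{Id}_E\otimes(d_\phi\vert_W)$ is the sum of the connection forms of the two factors, so no new poles are created. Given the identification of $\widehat{D}$ with the tensor product connection already carried out in Proposition \ref{prop3}, this reduces to a routine check; the remaining content of the argument is the rank bookkeeping above.
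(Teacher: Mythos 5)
Your proof is correct, but it takes a different route from the paper's. The paper argues the contrapositive directly at the level of $E$: from the factorization $\phi\,=\,g\circ h$ through a nontrivial \'etale cover $g\,:\,\widetilde{X}\,\longrightarrow\,X$ it gets an inclusion $g_*g^*E\,\hookrightarrow\,\phi_*\phi^*E$, observes that the direct-image connection $\widetilde{D}$ on $g_*g^*E$ coming from $g^*D$ is nonsingular (since $g$ is \'etale, $g^*K_X\,=\,K_{\widetilde{X}}$), checks that the inclusion intertwines $\widetilde{D}$ with $\widehat{D}$, and concludes $g_*g^*E\,\subset\,\textbf{R}(\phi_*\phi^*E)$ with $\mathrm{rank}(g_*g^*E)\,=\,\mathrm{rank}(E)\cdot\mathrm{degree}(g)\,>\,\mathrm{rank}(H(E))$. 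You instead prove the general tensor-compatibility $E\otimes\textbf{R}(\phi_*{\mathcal O}_Y)\,\subset\,\textbf{R}(\phi_*\phi^*E)$ under the identification \eqref{e11}, and then import the rank bound $\mathrm{rank}(\textbf{R}(\phi_*{\mathcal O}_Y))\,>\,1$ from the first half of the proof of Theorem \ref{thm1}; in effect you reduce the general-$E$ statement to the case $E\,=\,{\mathcal O}_X$ already settled there, so the \'etale factorization is invoked only once, inside Theorem \ref{thm1}, rather than rerun for $E$. The two arguments are closely related --- by the projection formula $g_*g^*E\,=\,E\otimes g_*{\mathcal O}_{\widetilde{X}}$ and $g_*{\mathcal O}_{\widetilde{X}}\,\subset\,\textbf{R}(\phi_*{\mathcal O}_Y)$, so your subsheaf contains the paper's --- but yours isolates a reusable fact (tensoring $\textbf{R}(\phi_*{\mathcal O}_Y)$ with any bundle carrying a nonsingular connection lands inside $\textbf{R}(\phi_*\phi^*E)$), while the paper's is self-contained and does not lean on Theorem \ref{thm1}. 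You were also right to flag that the bare statement of Theorem \ref{thm1} only yields ${\mathcal O}_X\,\subsetneq\,\textbf{R}(\phi_*{\mathcal O}_Y)$, which does not by itself rule out rank one, and to patch this by citing the proof; an alternative patch, available inside the paper, is \eqref{ed}: $\textbf{R}(\phi_*{\mathcal O}_Y)$ has degree zero, so a rank-one such sheaf containing ${\mathcal O}_X$ would equal ${\mathcal O}_X$, forcing rank $>\,1$ whenever the inclusion is strict. Your local verification that the tensor connection creates no new poles is the right justification for the key step, and the rank bookkeeping is sound.
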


\begin{proof}
To prove the proposition by contradiction, assume that the map $\phi$ is not genuinely ramified.
Then, as noted in the proof of Proposition \ref{prop-1}, there is a nontrivial \'etale covering
$$
g\, :\, \widetilde{X}\, \longrightarrow\, X
$$
and a morphism $h\, :\, Y\, \longrightarrow\,\widetilde{X}$ such that
\begin{equation}\label{j1}
g\circ h\,=\, \phi .
\end{equation}
{}From \eqref{j1} it follows immediately that
\begin{equation}\label{j2}
\iota\,\, :\,\, g_*g^*E\,\,\hookrightarrow\,\, \phi_*\phi^*E.
\end{equation}

Consider the nonsingular connection $g^*D$ on $g^*E$ given by $D$. Let
$\widetilde{D}$ denote the connection on $g_*g^*E$ obtained in 
Lemma \ref{lem2} from $g^*D$. We note that this connection 
$\widetilde{D}$ on $g_*g^*E$ is nonsingular. Indeed, this follows from the
observation that $g^*K_X\,=\, K_{\widetilde{X}}$, because the map $g$ is \'etale, and hence
$$g_*((g^*E)\otimes K_{\widetilde{X}})\,=\, (g_*g^*E)\otimes K_X$$
(projection formula).

The map in \eqref{j2} intertwines the connections $\widehat D$ (see \eqref{ee}) and $\widetilde D$ (see above)
on $\phi_*\phi^*E$ and $g_*g^*E$ respectively, meaning the following diagram is commutative:
$$
\begin{matrix}
g_*g^* E & \stackrel{\iota}{\longrightarrow} & \phi_*\phi^*E\\
\,\,\,\, \Big\downarrow {\widetilde D} && \,\,\,\, \Big\downarrow {\widehat D}\\
(g_*g^* E)\otimes K_X & \stackrel{\iota\otimes \xi}{\longrightarrow} &
(\phi_*\phi^*E)\otimes K_X\otimes{\mathcal O}_X(\Delta)
\end{matrix}
$$
where $\Delta$ is the polar divisor for $\widehat D$ and
$\xi\, :\, K_X\, \longrightarrow\,K_X\otimes {\mathcal O}_X(\Delta)$ is the natural homomorphism.
Consequently, we have $$g_*g^* E \, \subset\, \textbf{R}(\phi_*\phi^*E).$$ This implies that
$$\text{rank}(\textbf{R}(\phi_*\phi^*E))
\, \geq \, \text{rank}(g_*g^* E) \,=\, \text{rank}(E)\cdot \text{degree}(g)\, >\,
\text{rank}(E)\,=\, \text{rank}(H(E))$$
because $g$ is a nontrivial \'etale covering. In particular, we have
$$
\textbf{R}(\phi_*\phi^*E)\,\not=\, H(E).
$$
This completes the proof.
\end{proof}

\section*{Acknowledgements}

We are very grateful to the referee for Proposition \ref{prop-l} and also for
helpful comments to improve the manuscript.



\begin{thebibliography}{ZZZZZ}

\bibitem[At]{At} M. F. Atiyah, Complex analytic connections in fibre bundles,
{\it Trans. Amer. Math. Soc.} {\bf 85} (1957), 181--207.

\bibitem[BS]{BS} I. Biswas and S. Subramanian, Vector bundles on curves admitting
a connection, {\it Q. J. Math.} {\bf 57} (2006), 143--150. 

\bibitem[BP]{BP} I. Biswas and A. J. Parameswaran, Ramified covering maps and stability
of pulled back bundles, \textit{Int. Math. Res. Not.} (to appear), arXiv:2102.08744.

\bibitem[CLV]{CLV} I. Coskun, E. Larson and I. Vogt, Stability of Tschirnhausen bundles,
arXiv:2207.07257.

\bibitem[Gi]{Gi} D. Gieseker, Flat vector bundles and the fundamental group in
non-zero characteristics, {\it Ann. Scuola Norm. Sup. Pisa} {\bf 2} (1975), 1--31.

\bibitem[Ha]{Ha} R. Hartshorne, {\it Algebraic geometry}, Graduate Texts
in Mathematics, No. 52. Springer-Verlag, New York-Heidelberg, 1977.

\bibitem[HL]{HL} D. Huybrechts and M. Lehn, {\it The geometry of moduli spaces of
sheaves}, Aspects of Mathematics, E31, Friedr. Vieweg~\&~Sohn, Braunschweig, 1997.

\bibitem[Ka]{Ka} N. M. Katz, Nilpotent connections and the monodromy
theorem: Applications of a result of Turrittin, {\it Inst. Hautes \'Etudes
Sci. Publ. Math.} {\bf 39} (1970), 175--232.

\bibitem[Se]{Se} J.-P. Serre, G\'eom\'etrie alg\'ebrique et g\'eom\'etrie analytique,
{\it Ann. Inst. Fourier} {\bf 6} (1956), 1--42.

\bibitem[We]{We} A. Weil, G\'en\'eralisation des fonctions
ab\'eliennes, {\it Jour. Math. Pures Appl.} {\bf 17} (1938), 47--87.

\end{thebibliography}
\end{document}